\def\ps@pprintTitle{%
 \let\@oddhead\@empty
 \let\@evenhead\@empty
 \def\@oddfoot{}%
 \let\@evenfoot\@oddfoot}
\newtheorem{theorem}{Theorem}
\newtheorem{proposition}[theorem]{Proposition}
\newdefinition{remark}{Remark}
\journal{European Journal of Operational Research}
\begin{document} 

\begin{frontmatter}

\title{
Learning to solve the single machine scheduling problem with release times and sum of completion times}


\author[mymainaddress]{Axel Parmentier}
\ead{axel.parmentier@enpc.fr}

\author[mysecondaryaddress]{Vincent T'Kindt\corref{mycorrespondingauthor}}
\cortext[mycorrespondingauthor]{Corresponding author}
\ead{tkindt@univ-tours.fr}

\address[mymainaddress]{CERMICS, Ecole des Ponts, Marne-la-Vallée, France}
\address[mysecondaryaddress]{University of Tours,\\ 
	LIFAT (EA 6300), ERL CNRS ROOT 7002, Tours, France}

\begin{abstract}
In this paper, we focus on the solution of a hard single machine scheduling problem by new heuristic algorithms embedding techniques from machine learning field and scheduling theory. These heuristics transform an instance of the hard problem into an instance of a simpler one solved to optimality. The obtained schedule is then transposed to the original problem. Computational experiments show that they are competitive with state-of-the-art heuristics, notably on large instances. 
\end{abstract}

\begin{keyword}
Scheduling \sep Single machine \sep Structured learning \sep Local search

\end{keyword}

\end{frontmatter}



\section{Introduction} 
\label{sec:introduction}

Consider the problem where $n$ jobs have to be scheduled on a single machine. Each job $j$ is defined by a \emph{processing time} $p_j$ and a release date $r_j$ so that, in a give schedule, no job $j$ can start before its release date. The machine can only process one job at a time and preemption is not allowed. 
The goal is to find a schedule $s$ (permutation) that minimizes the total completion time $\sum_j C_j(s)$ with $C_j(s)$ the completion time of job $j$ in schedule $s$. If $s = (j_1,\ldots,j_n)$, then
$$C_{j_1}(s) = r_1 + p_1 \quad \text{and} \quad C_{j_k}(s) =  \max(C_{j_{k-1}}(s),r_{j_k}) + p_{j_k} \text{ for }k>1.
$$
When there is no ambiguity, we omit the reference to schedule $s$ when referring to completion times. Following the standard three-field notation in scheduling theory, this problem is referred to as 
$1|r_j|\sum_j C_j$ and is strongly $\cal NP$-hard \cite{AR76}.
When there is no release dates, the corresponding $1||\sum_j C_j$ problem can be solved in $O(n\log(n))$ time by means of the SPT rule (shortest processing times first). Again, when preemption is allowed, the corresponding $1|r_j, pmtn|\sum_j C_j$ problem can be solved $O(n\log(n))$ time by means of the SRPT rule (shortest remaining processing times first) \cite{KB74}.

The $1|r_j|\sum_j C_j$ problem is a challenging problem which has been studied for a long time. The two most competitive exact algorithms are the branch-and-memorize algorithm in \cite{STD20} and the dynamic programming based algorithm in \cite{TF12}, which are able to solve instances with a hundred of jobs. 
Heuristic algorithms can be used to compute good solutions in a reasonable amount of time. Along the years, numerous heuristic algorithms have been proposed. We cite the RDI local search based on the APRTF greedy rule proposed in \cite{CTU96} and which requires $O(n^4\log(n))$ time. The RBS heuristic (Recovering Beam Search) developed in \cite{DT02} is a truncated search tree approach that has been the most performing heuristic for a decade. The RBS heuristic requires $O(wn^3\log(n))$ time with $w$ the beam width parametrizing the heuristic: notice that in \cite{DT02} the case $w=1$ is considered. To the best of our knowledge, the state-of-the-art heuristic is a matheuristic proposed in \cite{DST14} which provides solutions very close to the optimal ones but at the price of a large CPU time requirement. All these heuristics have been milestones in the history of the $1|r_j|\sum_j C_j$ problem and will be considered as competitors for the learning based heuristics developed in this paper.

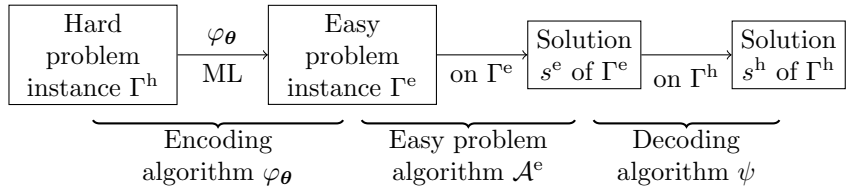
\begin{figure}[htp]
	\begin{center}
	
	\begin{tikzpicture}
	\node[draw, text width = 2cm,align=center] (a) at (0,0) {Hard problem \\ instance $\Gammah$};
	\node[draw, right=1.2cm of a.east, text width = 2cm, align=center] (b) {Easy problem \\ instance  $\Gammae$};
	\node[draw, right=1.2cm of b.east, align = center] (c) {Solution \\ $s^{\mathrm{e}}$ of $\Gammae$};
	\node[draw, right=1.2cm of c.east, align = center] (d) {Solution \\$s^{\mathrm{h}}$ of $\Gammah$};
	\draw[->] (a) to node[midway,above]{$\varphi_{\bftheta}$} node[midway,below]{ML} (b);
	\draw[->] (b) to node[midway,above]{} node[midway,below]{on $\Gammae$} (c);
	\draw[->] (c) to node[midway,above]{} node[midway,below]{on $\Gammah$} (d);
	
	\node[below= of a.south] (ba) {};
	\node (bb) at (a.south -| b) {};
	\node (bc) at (a.south -| c) {};
	\node (bd) at (a.south -| d) {};
	
	\draw [thick,decoration={brace, mirror, raise=0.2cm},decorate] (a.south) to node[midway,below=0.2cm, text width = 3cm, align = center] {Encoding \\ algorithm $\varphi_{\bftheta}$} (bb);
	\draw [thick,decoration={brace, mirror, raise=0.2cm},decorate] (bb) to node[midway,below=0.2cm, text width = 3cm, align=center] {Easy problem \\ algorithm $\calAe$} (bc);
	\draw [thick,decoration={brace, mirror, raise=0.2cm},decorate] (bc) to node[midway,below=0.2cm, text width = 3cm, align = center] {Decoding \\ algorithm $\psi$} (bd);
	\end{tikzpicture}
	\end{center}
	\caption{ML to approximate hard problems by well-solved ones}
	\label{fig:paradigm}
\end{figure}

The use of machine learning (ML) techniques within operations research (OR) algorithms is a recent but active and promising research area \citep{bengioMachineLearningCombinatorial2020}.
To the best of our knowledge, very few contributions of this kind have considered scheduling problems \citep{AOC16, FMN17, AS19}. 
In these works, ML is used to guide the solution process, {\it i.e.}, the proposed OR heuristic. In this paper, we elaborate on an original approach recently introduced in \citep{parmentierLearningApproximateIndustrial2019} and illustrated in Figure~\ref{fig:paradigm}. 
A ML predictor $\varphi_{\bftheta}$, which we call the encoding algorithm, is used to convert an instance $\Gammah$ of a {\it hard} optimization problem into an instance $\Gammae$ of an {\it easy} one, i.e., a problem for which a practically efficient algorithm $\calAe$ exists. 
Then, algorithm $\calAe$ is used to compute an optimal solution $s^{\mathrm{e}}$ of $\Gammae$.
And finally, a decoding algorithm is used to rebuild from $s^{\mathrm{e}}$ a solution $s^{\mathrm{h}}$ to $\Gammah$.
In this paper, the hard problem is the $1|r_j|\sum_j C_j$ problem and the easy problem is the $1||\sum_j C_j$ problem. The processing time $\hat p_j$ of job $j$ in $\Gammae$ is not equal to its processing time $p_j$ in $\Gammah$, but to a linear combination of features computed from $\Gammah$. 
The easy problem algorithm $\calAe$ is the SPT rule, and several different decoding algorithms are proposed to obtain a schedule for the $1|r_j|\sum_j C_j$ problem.

The main challenge to make such an approach work is to build an encoding algorithm $\varphi_{\bftheta}$ such that the optimal solution of the instance $\Gammae$ leads to a good solution $\Gammah$ after decoding.
As usual in ML, we first define an appropriate family of predictors $(\varphi_{\bftheta})_{\bftheta}$, and then seek (learn) the best parameter $\bftheta$.
Following \citep{parmentierLearningApproximateIndustrial2019}, we formulate the choice of $\bftheta$ as a structured learning problem.
Supervised learning is the branch of machine learning that is interested in constructing an approximation of an unknown function $f:x\mapsto y$ from a training set $(x_1,y_1),\ldots,(x_m,y_m)$, and structured learning \cite{nowozinStructuredLearningPrediction2010} is the specific branch of supervised learning where the output $y$ of $f$ belongs to a structured and combinatorially large set $\calY$.
In our case, $\calY$ is the set of schedules, i.e., the set of permutations of $[n]$. 
Structured learning algorithms on the permutation group have been thoroughly studied in the literature on rankings \citep{liuLearningRankInformation2011}, but with applications such as document retrieval, question answering or online advertising that are quite far from scheduling.
If the traditional learning approaches of this literature can in theory be applied in our context, we do not use them because the loss functions they use to evaluate if a ranking is a good approximation of another are not good evaluations of the quality of a $1|r_j|\sum_j C_j$ schedule.
We instead propose a novel approach based on Fenchel-Young \citep{blondelLearningFenchelYoungLosses} loss functions and inspired from the work of \citet{berthetLearningDifferentiablePerturbed2020}. This new approach is generic and can be applied to any hard problem for which the set of solutions of $\Gammae$ does not depend on $\varphi_{\bftheta}$.

The rest of the paper is organized as follows. Section \ref{sec:solution_scheme} introduces the learning based heuristics developped to solve the $1|r_j|\sum_j C_j$ problem. The proposed structured learning approach is presented in Section \ref{sec:structured_learning_methodology}. Extensive computational experiments showing the efficiency of the proposed heuristics are given in Section \ref{sec:numerical_results}. Finally, Section \ref{sec:conclusions} concludes the paper and provides further research directions.


\section{Learning based heuristics for the $1|r_j|\sum_j C_j$ problem} 
\label{sec:solution_scheme}

To solve the $1|r_j|\sum_j C_j$ problem we propose three heuristic approaches.
The general scheme of these heuristics is the one we already described on Figure~\ref{fig:paradigm}: 
We first use a predictor $\varphi_{\bftheta}$ which transforms an instance  $\Gammah = \big(n,(p_j,r_j)_{j \in [n]}\big)$  into an instance $\Gammae = \big(n,(\hat p_j)_{j \in [n]}\big)$ of the $1||\sum_j C_j$ problem. Then, the SPT rule solves the latter in polynomial time. 
And finally, we decode from the solution of $\Gammae$ a solution of $\Gammah$.
Consequently, the role of the predictor is mainly to define processing times $\hat p_j$ for the $1||\sum_j C_j$ problem so that the schedule produced by the SPT rule is a good schedule when reintroducing the release dates $r_j$ and the original processing times $p_j$. Let  $s^{\mathrm{e}}$ be the schedule  given by the SPT rule on $\Gammae$.
Our three heuristics share the same predictor $\varphi_{\bftheta}$, which is described in Section~\ref{sub:encoding}.
The two first heuristics we propose are direct applications of the above principle, with different decoding algorithm and are developed in Section~\ref{sec:2firstH}. 
The third heuristic is more elaborated and is developed in Section~\ref{sec:3rdH}.

\subsection{Encoding algorithm}
\label{sub:encoding}
Let us now focus on how the predictor $\varphi_{\bftheta}$ is built. A natural way to define the mapping $\varphi_{\bftheta}$ that maps $\Gammah$ to an instance $\Gammae$ is through a features vector  $\bfphi : (j,\Gammah) \mapsto \bfphi(j;\Gammah)$ that associates to each job $j$ in $\Gammah$ a vector of features $\bfphi(j;\Gammah)$.
Each component of this vector of features provides information on job $j$. 
The choice of the features is a crucial point to make the approach efficient and it is discussed in Section \ref{sec:numerical_results}. We suppose $\bfphi(i;\Gammah)$ to be in $\bbR^d$, with $d$ the number of used features.
Given a vector $\bftheta \in \bbR^d$ of parameters, we define the instance $\Gammae = \varphi_{\bftheta}(\Gammah)$ as the instance of $1||\sum_j C_j$ with $n$ jobs whose processing times $\hat p_j$ are defined by
$$ \hat p_j = \langle \bftheta | \bfphi(j;\Gammah) \rangle \quad \forall j \in \{1,\ldots,n\}. $$
In other words, the easy problem on instance $\Gammah$ is
\begin{equation}\label{eq:easyProblem}
	\min_s \langle \bftheta | \bfphi(s;\Gammah) \rangle \quad \text{where} \quad \phi(s;\Gammah) = \sum_{i=1}^n (n-i+1) \bfphi(j_i;\Gammah) 
\end{equation}
and $s = (j_1,\ldots,j_n)$.
Note that depending on the choice of $\bftheta \in \bbR^d$, job processing times $\hat p_j$ can be negative, which is not a problem since the SPT rule still applies in that case.
The aim of the structured learning problem described in Section~\ref{sec:structured_learning_methodology} is to choose $\bftheta \in \bbR^d$ in such a way that the optimal solution of $\Gammae$ is a good solution for $\Gammah$.




\subsection{Decoding algorithm: Heuristics \texttt{PMLH} and \texttt{IMLH}}
\label{sec:2firstH}

Solutions of both $1|r_j|\sum_j C_j$ and $1||\sum_j C_j$ are schedules, i.e., permutations of $[n]$.
Hence, a solution of $1||\sum_j C_j$ can be seen as a solution of $1|r_j|\sum_j C_j$, albeit with different completion times due to the release dates.
The first heuristic we propose, denoted by \texttt{PMLH} ({\it Pure Machine Learning Heuristic}) and illustrated on Figure~\ref{fig:PMLH} therefore only returns the schedule $s^{\mathrm{e}}$ as a solution of $\Gammah$.
The decoding algorithms only determine job starting times obtained when taking the release dates $r_j$ and processing times $p_j$. It follows that heuristic \texttt{PMLH} requires $O(n\log(n))$ time.

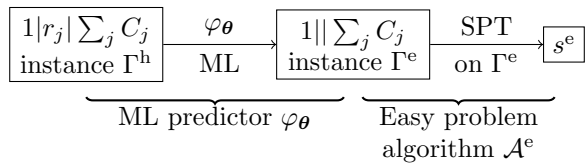
\begin{figure}[htp]
	\begin{center}
	
	\begin{tikzpicture}
	\node[draw, text width = 1.8cm,align=center] (a) at (0,0) {$1|r_j|\sum_j C_j$ instance $\Gammah$};
	\node[draw, right=1.5cm of a.east, text width = 1.8cm, align=center] (b) {$1||\sum_j C_j$ instance  $\Gammae$};
	\node[draw, right=1.5cm of b.east] (c) {$s^{\mathrm{e}}$};
	\draw[->] (a) to node[midway,above]{$\varphi_{\bftheta}$} node[midway,below]{ML} (b);
	\draw[->] (b) to node[midway,above]{SPT} node[midway,below]{on $\Gammae$} (c);
	
	\node[below= of a.south] (ba) {};
	\node (bb) at (a.south -| b) {};
	\node (bc) at (a.south -| c) {};
	
	\draw [thick,decoration={brace, mirror, raise=0.2cm},decorate] (a.south) to node[midway,below=0.2cm] {ML predictor $\varphi_{\bftheta}$} (bb);
	\draw [thick,decoration={brace, mirror, raise=0.2cm},decorate] (bb) to node[midway,below=0.2cm, text width = 3cm, align=center] {Easy problem \\ algorithm $\calAe$} (bc);
	\end{tikzpicture}
	\end{center}
	\caption{Heuristic \texttt{PMLH} general scheme.}
	\label{fig:PMLH}
	\end{figure}

It may happen that, in the schedule built from $s^{\mathrm{e}}$, two consecutive jobs $j$ and $k$, with $j\rightarrow k$, are such that $p_j>p_k$. Let $t$ be the starting time of $j$ in that schedule. Then, if $r_j,r_k\leq t$ the schedule is suboptimal and swapping $j$ and $k$ leads to a better schedule. In the second heuristic we propose, denoted by \texttt{IMLH} ({\it Improved Machine Learning Heuristic}) and illustrated on Figure~\ref{fig:solutionApproachOne}, we introduce in the decoding phase a fast local search \texttt{LS} to repair the above problematic cases, if any. This local search is given in Algorithm \ref{fig:solutionLS} and it has a $O(n^2)$ worst-case time complexity. To improve the schedule $s^i$ obtained from \texttt{LS}, we finally apply the \texttt{RDI} procedure described in \cite{CTU96}. More precisely, we apply the version of \texttt{RDI} where, each time a rescheduling of jobs is necessary, we use the SPT rule on the $\hat p_j$'s: always select among the available jobs at time $t$ the one with smallest $\hat p_j$. The \texttt{RDI} heuristic requires $O(n^4\log(n))$ time in the worst case. It follows that heuristic $\texttt{IMLH}$ requires $O(n^4\log(n))$ time in the worst case. 

\begin{figure}[htp]
\begin{center}

\begin{tikzpicture}
\node[draw, text width = 1.8cm,align=center] (a) at (0,0) {$1|r_j|\sum_j C_j$ instance $\Gammah$};
\node[draw, right=1.5cm of a.east, text width = 1.8cm, align=center] (b) {$1||\sum_j C_j$ instance  $\Gammae$};
\node[draw, right=1.5cm of b.east] (c) {$s^{\mathrm{e}}$};
\node[draw, right=1cm of c.east] (d) {$s^{\mathrm{i}}$};
\node[draw, right=1cm of d.east, align = center] (e) {solution \\$s$ of $\Gammah$};
\draw[->] (a) to node[midway,above]{$\varphi_{\bftheta}$} node[midway,below]{ML} (b);
\draw[->] (b) to node[midway,above]{SPT} node[midway,below]{on $\Gammae$} (c);
\draw[->] (c) to node[midway,above]{LS} node[midway,below]{on $\Gammah$} (d);
\draw[->] (d) to node[midway,above]{RDI} node[midway,below]{on $\Gammah$} (e);

\node[below= of a.south] (ba) {};
\node (bb) at (a.south -| b) {};
\node (bc) at (a.south -| c) {};
\node (be) at (a.south -| e) {};

\draw [thick,decoration={brace, mirror, raise=0.2cm},decorate] (a.south) to node[midway,below=0.2cm] {ML predictor $\varphi_{\bftheta}$} (bb);
\draw [thick,decoration={brace, mirror, raise=0.2cm},decorate] (bb) to node[midway,below=0.2cm, text width = 3cm, align=center] {Easy problem \\ algorithm $\calAe$} (bc);
\draw [thick,decoration={brace, mirror, raise=0.2cm},decorate] (bc) to node[midway,below=0.2cm] {Decoding algorithm} (be);
\end{tikzpicture}
\end{center}
\caption{Heuristic \texttt{IMLH} general scheme.}
\label{fig:solutionApproachOne}
\end{figure}
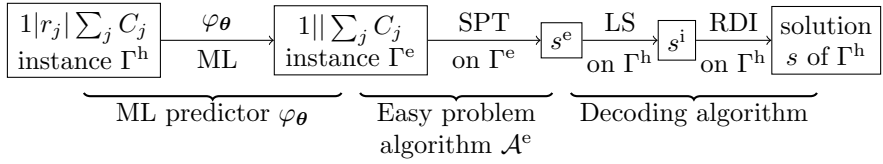

\begin{algorithm}[H]
	\begin{algorithmic}[1]
		\STATE \textbf{Input:} An instance $\Gammah$ and a schedule $s^{\mathrm{e}}$.
		\STATE \textbf{Output:} A schedule $s^i$. 
		\STATE $t=0$, $\ell=0$, $s^i=\emptyset$.
		\STATE $C_j=0, \forall j=1..n$. \hfill  \emph{$C_j$: job completion times in $s^i$}
		\WHILE{($\ell < (n-1)$)}
			\IF {($t<r_{s^{\mathrm{e}}[\ell]}$)} \hfill \emph{$s^{\mathrm{e}}[\ell]$ refers to the job in position $\ell$ in $s^{\mathrm{e}}$}
				\STATE $t=r_{s^{\mathrm{e}}[\ell]}$.
			\ENDIF
			\IF {($t\geq r_{s^{\mathrm{e}}[\ell+1]}$ and $p_{s^{\mathrm{e}}[\ell]}>p_{s^{\mathrm{e}}[\ell+1]})$} 
				\STATE Swap $s^{\mathrm{e}}[\ell]$ and $s^{\mathrm{e}}[\ell+1]$.
				\IF {($\ell=0$)}
					\STATE $t=0$.
				\ENDIF
				\IF {($\ell=1$)}
					\STATE $t=0$, $\ell=\ell-1$.
				\ENDIF
				\IF {($\ell>1$)}
					\STATE $t=C_{s^i[\ell-2]}$, $\ell=\ell-1$.
				\ENDIF
			\ELSE
			\STATE $t=t+p_{s^{\mathrm{e}}[\ell]}$, $s^{i}[\ell]=s^{\mathrm{e}}[\ell]$, $C_{s^i[\ell]}=t$, $\ell=\ell+1$.
			\ENDIF
		\ENDWHILE 
		\STATE {\bf return} $s^i$.
	\end{algorithmic}
	\caption{The LS repairing heuristic}
	\label{fig:solutionLS}
\end{algorithm}

\subsection{Decoding algorithm: Heuristic \texttt{itMLH}}
\label{sec:3rdH}

Heuristic \texttt{itMLH} (iterative MLH) is an extension of heuristic \texttt{IMLH} since $m$ vectors $\bftheta_k \in \bbR^d$ are generated by applying perturbations on the vector $\bftheta$ used in \texttt{IMLH}. The global scheme of \texttt{itMLH} is given in Figure~\ref{fig:severalApproximationsApproach}. 
In this approach, we choose to approximate $\Gammah$ by $m$ instances of the easy problem $\Gammae_1,\ldots,\Gammae_m$.
We suggest to use the instances:
\begin{equation}\label{eq:mInstancesGammaEDefinition}
\Gammae_i = \varphi_{\bftheta_k}(\Gammah)
\end{equation}
with $\bftheta_k = \bftheta + z_k$ and $\bfz_k$ is a sample of a random variable $\bfZ$ on $\bbR^d$, that can be chosen arbitrarily. The processing time $\hat p_{j,k}$ of job $j$ in instance $\Gammae_k$ is thus:
$$\hat p_{j,k} = \langle \bftheta_k | \bfphi(j;\Gammah)\rangle. $$
Note that if the results of \texttt{PMLH} and \texttt{IMLH} depend on the direction of $\bftheta$ but not on its norm, the result of \texttt{itMLH} depends on its direction and its norm because of perturbation $\bfZ$.
When learning the parameter $\bftheta$ for \texttt{itMLH}, we must therefore take into account the distribution of the perturbation $\bfZ$ that is used in \texttt{itMLH}.
This is what we do in Section~\ref{sec:structured_learning_methodology}. 
In our numerical experiments, we use a $\bfZ$ distributed according to the normal distribution $\calN(0,I_d)$ where $I_d$ is the identity matrix of dimension~$d$.

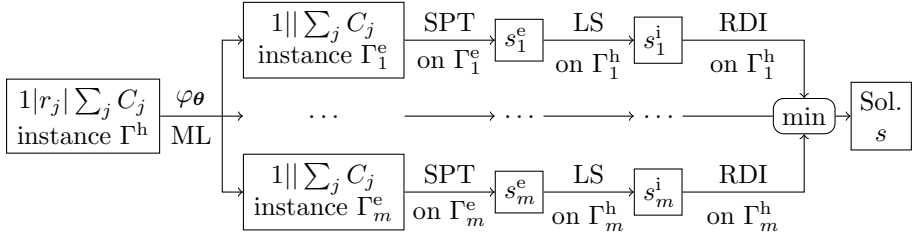
\begin{figure}
\begin{tikzpicture}
\node[draw, text width = 1.8cm,align=center] (a) at (0,0) {$1|r_j|\sum_j C_j$ instance $\Gammah$};
\node[draw, above right=0.5cm and 1.1cm of a.east, text width = 1.9cm, align=center] (b1) {$1||\sum_j C_j$ instance  $\Gammae_1$};
\node[right=1.1cm of a.east, text width = 1.9cm, align=center] (b2) {$\ldots$};
\node[draw, below right=0.5cm and 1.1cm of a.east, text width = 1.9cm, align=center] (b3) {$1||\sum_j C_j$ instance  $\Gammae_m$};
\node[draw, right=1.2cm of b1.east, text width=0.4cm] (c1) {$s_{1}^{\mathrm{e}}$};
\node[right=1.2cm of b2.east, text width=0.4cm] (c2) {$\ldots$};
\node[draw, right=1.2cm of b3.east, text width=0.4cm] (c3) {$s_{m}^{\mathrm{e}}$};
\node[draw, right=1.2cm of c1.east, text width=0.4cm] (d1) {$s_{1}^{\mathrm{i}}$};
\node[right=1.2cm of c2.east, text width=0.4cm] (d2) {$\ldots$};
\node[draw, right=1.2cm of c3.east, text width=0.4cm] (d3) {$s_{m}^{\mathrm{i}}$};

\node[left=0.15cm of b1.west] (bm1) {};
\node[left=0.15cm of b2.west] (bm2) {};
\node[left=0.15cm of b3.west] (bm3) {};
\draw[<->] (b1.west) -- (bm1.center) -- (bm3.center) -- (b3.west);

\node[draw, rounded corners, right=1.2cm of d2.east] (dp2) {$\min$};
\node (dp1) at (d1.east -| dp2) {};
\node (dp3) at (d3.east -| dp2) {};
\draw[->] (d1) to node[midway, above] {RDI} node[midway,below] {on $\Gammah_1$} (dp1.center) to (dp2);
\draw[->] (d3) to node[midway, above] {RDI} node[midway,below] {on $\Gammah_m$} (dp3.center) to (dp2);

\node[draw, right=0.2cm of dp2.east, align = center] (e) {Sol.\\$s$};

\draw[->] (a) to node[midway,above]{$\varphi_{\bftheta}$} node[midway,below]{ML} (bm2.center) -- (b2);
\draw[->] (b1) to node[midway,above]{SPT} node[midway,below]{on $\Gammae_1$} (c1);
 \draw[->] (b2) to (c2);
\draw[->] (b3) to node[midway,above]{SPT} node[midway,below]{on $\Gammae_m$} (c3);
\draw[->] (c1) to node[midway,above]{LS} node[midway,below]{on $\Gammah_1$} (d1);
 \draw[->] (c2) to (d2);
\draw[->] (c3) to node[midway,above]{LS} node[midway,below]{on $\Gammah_m$} (d3);
\draw[->] (d2) -- (dp2) to node[midway,above]{} node[midway,below]{} (e);
\end{tikzpicture}
\caption{Solution approach using several approximations}
\label{fig:severalApproximationsApproach}
\end{figure}

The heuristic \texttt{itMLH} starts by running $\texttt{IMLH}$ before generating iteratively the $m$ instances $\Gammae_k$. To speed up the heuristic, at each iteration $k$, if the schedule $s^{\mathrm{e}_k}$ has already been obtained at a previous iteration $k'<k$, then neither \texttt{LS} nor \texttt{RDI} are applied and the heuristic starts iteration ($k+1$). Similarly, \texttt{RDI} is not applied whenever $s^i_k$ has already been computed at a previous iteration. It follows that heuristic \texttt{itMLH} requires $O(mn^4\log(n))$ time in the worst case.
Finally, remark that \texttt{itMLH} can be easily parallelized.




\section{Structured learning methodology}
\label{sec:structured_learning_methodology}

\subsection{Background on structured learning}
\label{sub:background_on_structured_learning}

After a brief introduction to structured learning which follows \citep{parmentierLearningApproximateIndustrial2019}, we introduce our own variant of the perturbed optimizers and the resulting Fenchel-Young loss structured learning problem of \citet{berthetLearningDifferentiablePerturbed2020}. 

\subsubsection{Setting} 
\label{ssub:setting}

Broadly speaking, supervised learning  \citep{nowozinStructuredLearningPrediction2010} aims at learning an unknown function
\begin{equation*}
	\begin{array}{rcl}
		f:\calX &  \rightarrow & \calY \\
		x & \mapsto & y
	\end{array}
\end{equation*}
from a training set $(x_1,y_1),\ldots,(x_n,y_n)$ where $y_i$ is a noisy observation of $f(x_i)$.
Structured learning \citep{nowozinStructuredLearningPrediction2010} is a branch of supervised learning which
deals with problems such that, for any $x$ in $\calX$, the mapping $f(x)$ takes its value in a set $\calY(x)$ that is finite, combinatorially large, and structured. 
To predict the value $f(x)$ on a new instance $x$, we solve the following optimization problem:
\begin{equation}\label{eq:structuredPredictionProblem}
	y^*= \hat f_{\bftheta} (x) := \argmax_{y \in \calY(x)} g_{\bftheta}(y,x)
\end{equation}
where the statistical model $\hat f_{\bftheta}$ is defined through an auxiliary \emph{evaluation function} $g_{\bftheta}$.
Problem~\eqref{eq:structuredPredictionProblem} is called the \emph{structured prediction problem}.
Given a training set $(x_i,y_i)_{i \in [n]}$, the objective of the \emph{structured learning problem} is to learn a parameter ${\bftheta}$ such that $\hat f_{\bftheta}$ is a good approximation of $f$.
Several approaches have been considered in the structured learning literature to formulate the structured learning problem, notably based on the maximum likelihood estimators (MLE) or on the minimization of surrogate loss functions \citep{nowozinStructuredLearningPrediction2010}. 
Contrarily to \citep{parmentierLearningApproximateIndustrial2019}, we do not use a MLE approach, because computing the likelihood would require to evaluate a sum on all the elements of the permutation group, which is a difficult problem that would require an ad-hoc algorithm.
We instead introduce a Fenchel-Young loss approach, which leads to a much easier learning problem. 


In the rest of the paper, we assume to have a \emph{feature map} $\bfphi : (x,y) \mapsto \bfphi(y;x)$ which associates to every $y$ in $\calY(x)$ a vector $\bfphi(y;x)$ in $\bbR^d$ of features describing the properties of $y$. 
And we restrict ourselves to linear models of the form:
$$g_{\bftheta}(y,x) := \langle \bftheta | \bfphi(y;x)\rangle, $$ 
which leads to structured prediction problems of the form
\begin{equation}\label{eq:linearStructuredPredictionProblem}
	\max_{y \in \calY(x)}\langle \bftheta | \bfphi(y;x)\rangle.
\end{equation}

Let $\calC(x)$ be the convex hull of $\{\bfphi(y;x)\colon y \in \calY(x)\}$ and $\calS(x)$ be its linear span.
Remark that it is useless to consider features $\bfphi(y;x)$ that are affinely dependent for all $y \in \calY$. 
Indeed, if $\calS(x)\subsetneq \bbR^d$, and we decompose $\bftheta$ into $\bftheta_1 + \bftheta_2$ where $\bftheta_1$ is the orthogonal projection of $\bftheta$ on $\calS(x)$ and $\bftheta_2$ on its orthogonal complement, then the value of $\bftheta_2$ does not impact the result of the structured prediction problem.
And we can obtain the same prediction flexibility by removing some features so as to obtain a full dimensional $\calC(x)$. This justifies that, without loss of generality, we assume in the rest of the paper that $\calC(x)$ has a non-empty interior (for at least some $x$).
We denote by $\bfeta$ the vectors in $\calS(x)$. 

\subsubsection{Perturbed prediction problem} 
\label{ssub:perturbed_prediction_problem}

As described in Section~\ref{sec:3rdH}, we add to $\bftheta$ a random perturbation $\bfZ$, with $\bfZ$ a random variable on $\bbR^d$.  
Let be:
\begin{align*}	
	F(\bftheta;x) &:= \bbE_{\bfZ} \Big( \max_{y \in \calY(x)} \langle \bftheta + \bfZ| \bfphi(y;x)\rangle \Big) = \bbE_{\bfZ} \Big( \max_{\bfeta \in \calC(x)} \langle \bftheta + \bfZ| \bfeta\rangle \Big) 
	\quad \text{and,} \\ 
	\bfphi^*(\bftheta;\bfx) &:= \bbE_{\bfZ}\Big(\bfphi\Big(\argmax_{y \in \calY(x)}\big\langle\bftheta + \bfZ | \bfphi(y;x)\big\rangle;x\Big)\Big) = \bbE_{\bfZ}\Big(\argmax_{\bfeta \in \calC(x)}\langle \bftheta + \bfZ | \bfeta \rangle\Big).
\end{align*}
We denote by $\Omega(\cdot;x)$ the Fenchel dual of $F(\cdot;x)$, {\it i.e.}, 
$$\Omega(\bfeta;x) := \argmax_{\bftheta \in \calC(x)}\langle \bfeta | \bftheta \rangle - F(\bfeta;y).$$
\noindent The proof of the following result is available in~\ref{sec:proofs}.
\begin{proposition}\label{prop:propertiesOfF}
	The mapping $F(\cdot;x) : \bftheta \mapsto F(\bftheta;x)$ is convex with subgradient $\bfphi^*(\bftheta;\bfx)$, which by abuse of notations we denote by $\nabla_{\bftheta} F(\bftheta;x)$.
	\begin{enumerate}
		\item If $Z$ has positive and differentiable density $\rmd\mu(z) \propto \exp(-\nu(z))\rmd z$, then $F(\cdot;x)$ is twice differentiable and the subgradient above is its gradient. If in addition $\calC(x)$ has non-empty interior, then $F(\cdot;x)$ is strictly convex.
		\item If $Z$ has a sampled distribution $\frac{1}{m}\sum_{i=1}^m\delta_{\bfz_i}$, then $F(\cdot;x)$ is piecewise linear. 
	\end{enumerate}
\end{proposition}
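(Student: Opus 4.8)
The plan is to treat the three claims in increasing order of difficulty, exploiting throughout that for a fixed realization $z$ of $\bfZ$ the inner map $\bftheta \mapsto g_z(\bftheta) := \max_{\bfeta \in \calC(x)}\langle \bftheta + z | \bfeta\rangle$ is the support function of the polytope $\calC(x)$ evaluated at $\bftheta + z$, hence a pointwise maximum of affine functions of $\bftheta$, and therefore convex. Convexity of $F(\cdot;x) = \bbE_{\bfZ}[g_{\bfZ}]$ is then immediate, being an expectation of convex functions. For the subgradient claim I would, for each $z$, choose a maximizer $\bfeta^*_z \in \argmax_{\bfeta\in\calC(x)}\langle \bftheta + z| \bfeta\rangle$; since $\bfeta^*_z \in \partial g_z(\bftheta)$ we have $g_z(\bftheta') \geq g_z(\bftheta) + \langle\bfeta^*_z | \bftheta' - \bftheta\rangle$ for every $\bftheta'$. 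Taking expectations (finite, since $\calC(x)$ is bounded) and using linearity of $\bbE_{\bfZ}$ yields
$$F(\bftheta';x) \ge F(\bftheta;x) + \langle \bfphi^*(\bftheta;x)| \bftheta' - \bftheta\rangle,$$
which is exactly $\bfphi^*(\bftheta;x)\in\partial F(\bftheta;x)$.

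For the smoothness part of item~1, the key idea is the change of variable $w = \bftheta + z$, which moves all the $\bftheta$-dependence into the density. Writing $h_x(w) := \max_{\bfeta\in\calC(x)}\langle w| \bfeta\rangle$ for the (fixed, $\bftheta$-independent) support function, I get $F(\bftheta;x) = \int h_x(w)\,\mu(w-\bftheta)\,\rmd w$. Since $\mu(z)\propto \exp(-\nu(z))$ is positive and smooth while $h_x$ grows at most linearly and $\calC(x)$ is bounded, I would differentiate under the integral sign, the differentiations landing on the density $\mu$ and its derivatives; the interchange is justified by dominated convergence using the rapid decay of $\mu$. This gives the claimed twice differentiability of $F(\cdot;x)$, and since a differentiable convex function has a unique subgradient equal to its gradient, the subgradient computed above is indeed $\nabla_{\bftheta}F(\bftheta;x) = \bfphi^*(\bftheta;x)$.

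The hard part is strict convexity when $\calC(x)$ has non-empty interior, and this is the step I would spend the most care on. I would reduce to one dimension: fix a direction $u\neq 0$ and show $t\mapsto F(\bftheta + tu;x)$ is strictly convex. The difficulty is that each $g_z$ is only piecewise linear (the support function of a polytope is nowhere strictly convex), so strictness must come entirely from the averaging. The plan is to observe that along the ray the distributional second derivative of $t \mapsto h_x(\bftheta + z + tu)$ is a sum of strictly positive slope-jumps, located at the values of $t$ where $\bftheta + z + tu$ crosses a facet of the normal fan of $\calC(x)$, i.e.\ where two adjacent vertices $\bfeta_i \neq \bfeta_j$ tie for the maximum; such a crossing occurs at $t$ solving $\langle z + tu | \bfeta_i - \bfeta_j\rangle = \langle \bftheta | \bfeta_j - \bfeta_i\rangle$. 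Because $\calC(x)$ is full dimensional its vertices affinely span $\bbR^d$, so there is an edge with $\langle u | \bfeta_i - \bfeta_j\rangle \neq 0$; since $\bfZ$ has positive density on all of $\bbR^d$, these crossings land in any prescribed subinterval of $t$ with positive probability, forcing the expected second derivative to be strictly positive there. Hence $F(\cdot;x)$ admits no affine piece and is strictly convex. The main obstacle is making this ``expected density of kinks'' argument fully rigorous, in particular verifying that the tying vertices are genuinely both optimal along a set of $z$ of positive measure rather than on a lower-dimensional exceptional set.

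Finally, item~2 is immediate: for $\bfZ$ distributed as $\frac1m\sum_{i=1}^m \delta_{\bfz_i}$ we have $F(\bftheta;x) = \frac1m\sum_{i=1}^m h_x(\bftheta + \bfz_i)$, a finite sum of support functions of the polytope $\calC(x)$. Each $h_x(\cdot + \bfz_i)$ is a maximum of finitely many affine functions, hence piecewise linear, and a finite sum of piecewise linear functions is piecewise linear.
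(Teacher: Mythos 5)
Your proposal is correct, but it takes a genuinely different route from the paper for the simple reason that the paper contains almost no proof: the main statement and item~1 are dispatched by citing Sections~2 and~3 of \citet{berthetLearningDifferentiablePerturbed2020}, and item~2 by the one-line observation that $F(\cdot;x)$ is piecewise linear in the sampled case. What you wrote is essentially a self-contained reconstruction of the cited results: convexity of $F(\cdot;x)$ as an expectation of support functions, the subgradient property by integrating the pointwise subgradient inequality for a measurable selection of maximizers (do say a word on measurability of $z \mapsto \bfeta^*_z$, e.g.\ take the lexicographically smallest optimal vertex of $\calC(x)$), and twice differentiability via the change of variables $w = \bftheta + z$, which turns $F$ into a convolution of the support function with the density so that derivatives land on $\mu$ --- exactly the mechanism used in the reference; note only that ``rapid decay'' of $\mu$ is not among the stated hypotheses, so the domination argument implicitly adds a mild integrability assumption on $\mu$ and its derivatives (an assumption the cited paper also makes). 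The one place where your argument is heavier than necessary is strict convexity: the ``expected density of kinks'' computation can be made rigorous, but a cleaner contradiction argument avoids the distributional second-derivative bookkeeping and the exceptional-set issue you flag. Namely, if $F(\cdot;x)$ were affine on a segment $[\bftheta_1,\bftheta_2]$, then since each $z$-term is convex and their average is affine, the support function $h_x$ must be affine on $[\bftheta_1+z,\bftheta_2+z]$ for almost every $z$, hence by continuity for every $z$; overlapping these translates shows $h_x$ is affine on every line in direction $u=\bftheta_2-\bftheta_1$, which forces $h_x(u)=-h_x(-u)$, i.e.\ $\max_{\bfeta\in\calC(x)}\langle u|\bfeta\rangle = \min_{\bfeta\in\calC(x)}\langle u|\bfeta\rangle$, so $\calC(x)$ lies in a hyperplane, contradicting its nonempty interior. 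Finally, your treatment of item~2 (an average of $m$ maxima of finitely many affine functions) is in fact stated more carefully than the paper's own phrasing, which calls $F$ ``a maximum of $m$ linear mappings.''
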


\subsubsection{Perturbed model learning with Fenchel-Young losses} 
\label{ssub:perturbed_model_learning_with_fenchel_young_losses}

In the loss approach to supervised learning, we suppose to have a loss $\ell(\bftheta;y,x)$ that evaluates how far the $y$ predicted by~\eqref{eq:structuredPredictionProblem} is from the true $y$, and we formulate the learning problem as
$$ \min_{\bftheta}\frac{1}{n}\sum_{i=1}^n \ell(\bftheta;y_i,x_i).$$
An ideal loss $\ell(\bftheta;y,x)$ for the structured learning problem would be a loss that is non-negative, easy to minimize and such that $\ell(\bftheta;y,x)=0$ if and only if the $y^*$ realizing the maximum in \eqref{eq:linearStructuredPredictionProblem} is such that $y^* = y$.
But since predictions are done using \eqref{eq:linearStructuredPredictionProblem} where $y$ appears only through $\bfphi(y;x)$, we cannot hope to distinguish $y_1$ and $y_2$ if $\bfphi(y_1;x) = \bfphi(y_2;x)$.
Thus, all we can ask for is that $\ell(\bftheta;y,x) = 0$ if and only if $\bfphi(y;x) \in \argmax_{\bfeta \in \calC(x)}\langle \bftheta | \bfeta \rangle$.\\

Given $x$ and $y \in \calY(x)$, the \emph{Fenchel-Young loss} $L(\bftheta;y,x)$ is defined by:
$$ L(\bftheta;y,x) = F(\bftheta;x) + \Omega(\bfphi(y;x);x) - \langle \bftheta | \bfphi(y;x)\rangle. $$
It follows from Proposition~\ref{prop:propertiesOfF} that the following vector, which by abuse of notations we denote by $\nabla_{\bftheta}L(\bftheta;y,x)$, is a subgradient of $L(\cdot;y,x)$.
	\begin{equation}\label{eq:gradientLoss}
		\nabla_{\bftheta}L(\bftheta;y,x) = \bfphi^*(\bftheta;x) - \bfphi(y;x)
	\end{equation}
Given $\bfz$ in $\bbR^d$, we denote by $\delta_{\bfz}$ the Dirac distribution in $\bfz$.
The proof of the following result is available in \ref{sec:proofs}.
\begin{proposition}\label{prop:propertiesOfFYL}
	The Fenchel-Young loss $L(\cdot;y,x)\colon\bftheta \mapsto L(\bftheta;y,x)$ is non-negative, convex in $\bftheta$, and with value $0$ if and only if $\bfphi(y;x) \in \partial_{\bftheta}F(\bftheta;x)$, where $\partial_{\bftheta}F(\cdot;x)$ is the subdifferential of $F(\cdot;x)$.
	\begin{enumerate}
		\item If $Z$ has positive and differentiable density $\rmd\mu(z) \propto \exp(-\nu(z))\rmd z$ and $\calC(x)$ has non-empty interior, then $L(\cdot;y,x)$ is strictly convex and $L(\bftheta;y,x) = 0$ if and only if $\bfphi(y;x) = \bfphi^*(\bftheta;x)$.
		\item If $Z$ has a sampled distribution $\frac{1}{m}\sum_{i=1}^m\delta_{\bfz_i}$, then $L(\bftheta;y,x) = 0$ if and only if there exists $\displaystyle\bfeta_1 \in \argmax_{\bfeta \in \calC(x)}\langle \bftheta + \bfz_1|\bfeta\rangle,\ldots, \bfeta_m \in \argmax_{\bfeta \in \calC(x)}\langle \bftheta + \bfz_m|\bfeta\rangle$ such that $\bfphi(y;x) = \frac{1}{m}\sum_{i=1}^m \bfeta_i$.
	\end{enumerate}
\end{proposition}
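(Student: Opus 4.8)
The plan is to recognize $L(\cdot;y,x)$ as the \emph{Fenchel--Young gap} associated with the convex function $F(\cdot;x)$ and its Fenchel dual $\Omega(\cdot;x) = F^*(\cdot;x)$, and then to read off every claimed property from the Fenchel--Young inequality combined with Proposition~\ref{prop:propertiesOfF}. The only structural input needed is that $F(\cdot;x)$ is a finite convex (hence closed) function, so that $F^{**} = F$ and the equality case of the inequality applies.

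First I would handle the general statement (non-negativity, convexity, and the value-$0$ condition). By Proposition~\ref{prop:propertiesOfF}, $F(\cdot;x)$ is finite and convex, hence lower semicontinuous, so the Fenchel--Young inequality
$$F(\bftheta;x) + \Omega(\bfeta;x) \geq \langle \bftheta | \bfeta \rangle$$
holds for all $\bftheta,\bfeta$, with equality if and only if $\bfeta \in \partial_{\bftheta}F(\bftheta;x)$. Specializing $\bfeta = \bfphi(y;x)$ yields exactly $L(\bftheta;y,x) \geq 0$, with equality iff $\bfphi(y;x) \in \partial_{\bftheta}F(\bftheta;x)$. Convexity in $\bftheta$ is then immediate, since $L(\cdot;y,x)$ is the sum of the convex term $F(\cdot;x)$, the constant $\Omega(\bfphi(y;x);x)$, and the linear term $-\langle \cdot | \bfphi(y;x)\rangle$.

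For Part~1 I would use that adding an affine function preserves strict convexity: as $L(\cdot;y,x)$ equals $F(\cdot;x)$ plus the affine term $\Omega(\bfphi(y;x);x) - \langle \cdot | \bfphi(y;x)\rangle$, the strict convexity of $F(\cdot;x)$ granted by Proposition~\ref{prop:propertiesOfF}(1) transfers to $L(\cdot;y,x)$. Differentiability of $F$ makes $\partial_{\bftheta}F(\bftheta;x) = \{\bfphi^*(\bftheta;x)\}$ a singleton, so the value-$0$ condition collapses to $\bfphi(y;x) = \bfphi^*(\bftheta;x)$. For Part~2, with $\bfZ$ sampled, $F(\bftheta;x) = \frac{1}{m}\sum_{i=1}^m \max_{\bfeta \in \calC(x)}\langle \bftheta + \bfz_i | \bfeta\rangle$, a finite sum of support-function terms. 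I would compute the subdifferential of each summand $\bftheta \mapsto \max_{\bfeta \in \calC(x)}\langle \bftheta + \bfz_i|\bfeta\rangle$ as its exposed face $\argmax_{\bfeta \in \calC(x)}\langle \bftheta + \bfz_i|\bfeta\rangle$, then apply the Moreau--Rockafellar sum rule to obtain $\partial_{\bftheta}F(\bftheta;x) = \frac{1}{m}\sum_{i=1}^m \argmax_{\bfeta \in \calC(x)}\langle \bftheta + \bfz_i|\bfeta\rangle$ as a scaled Minkowski sum. The general value-$0$ characterization then becomes the existence of $\bfeta_i$ in each argmax with $\bfphi(y;x) = \frac{1}{m}\sum_{i=1}^m \bfeta_i$.

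The main obstacle is to justify the two borrowed facts rigorously rather than merely cite them. Pinning down $\bfeta \in \partial_{\bftheta}F(\bftheta;x)$ as the \emph{exact} equality condition of Fenchel--Young requires $F$ closed convex so that $F^{**} = F$, which I would secure from the finiteness and convexity in Proposition~\ref{prop:propertiesOfF}. In Part~2 the delicate point is the identification of the subdifferential of a support function with its exposed face (a consequence of Danskin's theorem) and the subsequent sum rule; both are valid here because all summands are finite convex functions on $\bbR^d$, so no constraint-qualification issue arises.
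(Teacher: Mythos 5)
Your proof is correct and follows essentially the same route as the paper's: the Fenchel--Young inequality gives non-negativity and the zero-value characterization, convexity (and strict convexity in Part~1, via Proposition~\ref{prop:propertiesOfF}) transfers from $F(\cdot;x)$ since the remaining terms are affine in $\bftheta$, and Part~2 follows from identifying $\partial_{\bftheta}F(\bftheta;x)$ with the averaged Minkowski sum of the argmax sets. You merely fill in details the paper delegates to \citet{berthetLearningDifferentiablePerturbed2020} (the sum rule, the exposed-face identification); note also that your concern about securing $F^{**}=F$ is unnecessary, since equality in Fenchel--Young being equivalent to $\bfphi(y;x)\in\partial_{\bftheta}F(\bftheta;x)$ follows directly from the definitions of the conjugate and the subgradient, with no closedness hypothesis needed.
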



Given a training set $(x_1,y_1),\ldots,(x_n,y_n)$, we therefore formulate the learning problem as
\begin{equation}\label{eq:learningProblem}
	\min_{\bftheta\in \bbR^d} \frac{1}{n} \sum_{i=1}^n L(\bftheta;y_i,x_i).
\end{equation}
In practice, we use a normal distribution $\calN(0,I_d)$ for $\bfZ$, and the first point of Proposition~\eqref{prop:propertiesOfFYL} ensures that~\eqref{eq:learningProblem} is a strictly convex optimization problem provided that $\calC(x_i)$ has non-empty interior for at least one $i$, which is practically the case as we explained at the end of Section~\ref{ssub:setting}.

The practical difficulty of the problem~\eqref{eq:learningProblem} lies in the integrals that appear in $F$ and its gradient, even if we omit the hard to compute constant terms $\Omega(y_i)$ in the objective function. Therefore, we replace it by its \emph{sample average approximation} (SAA): we draw $m$ samples $\bfz_1,\ldots,\bfz_m$ of $Z$ and replace $\mu$ by its sample average approximation $\tilde{\mu}$.
The consequence is to take the expectations with respect to $\tilde{\mu}$ when computing $F$ in the objective function and $\bfphi_\varepsilon^*$ in its gradient \eqref{eq:gradientLoss}.
The second point of Proposition~\ref{prop:propertiesOfFYL} then enables to interpret the $\bftheta$ obtained.
And the second point of Proposition~\ref{prop:propertiesOfF} ensures that the SAA problem is a convex and piecewise-linear non-constrained optimization problem.
Given that ~\eqref{eq:gradientLoss} ensures that subgradients can be computed by solving several problems of the form~\eqref{eq:linearStructuredPredictionProblem}, we can therefore solve the SAA problem using a subgradient algorithm, or any other non-differentiable convex optimization method.
\citet{lewisNonsmoothOptimizationQuasiNewton2013} and others have observed that BFGS algorithms perform well in practice on many such problems, although there is no theoretical convergence guarantee.
We therefore solve the problem with a BFGS algorithm. 
And we have observed a fast convergence to an optimal solution -- the certificate being a null gradient.
Altenative approaches could of course be used.

\begin{remark}
	The reader familiar with Machine Learning will note that we do not add a regularization term in~\eqref{eq:learningProblem}. 
	Indeed, the perturbation $\bfZ$ can be seen as a regularizer on the features space, as it is generally the case with Fenchel Young losses (see~\citet{blondelLearningFenchelYoungLosses} for more details).
\end{remark}

\begin{remark}
	\citet{berthetLearningDifferentiablePerturbed2020} solve the non-sampled learning problem using a stochastic gradient descent, which is required in their case because their $\bfphi(y;x)$ is the output of a deep neural network that they fit using this algorithm. 
	As we do not have this practical constraint, we have chosen to minimize the SAA approximation with a BFGS algorithm because it is easy to implement and gives a fast convergence in practice.
\end{remark}


\subsection{Learning to approximate the $1|r_j|\sum_j C_j$ problem} 
\label{sub:learning_to_approximate_}

We formulate the problem of learning the parameter $\bftheta$ of our scheme of Section~\ref{sec:solution_scheme} as a structured learning problem. 
In this case, the function $f$ we approximate is the function:
\begin{equation*}
\begin{array}{rcl}
	f : \Gamma \mapsto s^* (\Gamma) \quad \text{where $s^*(\Gamma)$ is an optimal solution $\Gamma$},
\end{array}
\end{equation*}
and $\Gamma$ is an instance of the $1|r_j|\sum_j C_j$ problem.
The structured prediction problem~\eqref{eq:linearStructuredPredictionProblem} then coincides with the easy problem~\eqref{eq:easyProblem}, the only difference being that the maximum is replaced by a minimum.
All we have to do is therefore to build a training set $(\Gamma_1,s_1), \ldots, (\Gamma_n,s_n)$ of instances of the scheduling problem with their optimal solution and solve the learning problem of Section~\ref{ssub:perturbed_model_learning_with_fenchel_young_losses} to obtain a parameter $\bftheta^*$.
We can then use $-\bftheta^*$ as parameter $\bftheta$ in~\eqref{eq:easyProblem}.

\begin{remark}
	In their work on perturbed maximizers, \citet{berthetLearningDifferentiablePerturbed2020} perturb the feature vector $\bfphi$ instead of the parameter $\bftheta$.
	For a reader that would compare the approaches, we underline that our $\bfphi$ correspond to their $\bftheta$ (see notably Section~5.2 in \cite{berthetLearningDifferentiablePerturbed2020}).
	We have chosen to use this alternative perturbation because it leads to better numerical results on the problem we consider.
\end{remark}

\subsection{Broad applicability}

In this paper, we use a structured learning approach to approximate instances of the hard problem $1|r_j|\sum_j C_j$  by easier instances of the $1||\sum_j C_j$ problem.
As detailed in \citep{parmentierLearningApproximateIndustrial2019}, the paradigm of using machine learning to approximate instances of a hard problem by instances of an easier one is quite generic, and can be used, for instance, to linearize problems.
However, the structured learning algorithm used to learn $\bftheta$ in~\citep{parmentierLearningApproximateIndustrial2019} is not generic: ad-hoc algorithms must be derived for new applications.
A strong advantage of the approach proposed in paper is that it is completely generic.
Indeed, the only things we need to apply it to new problems is:
\begin{enumerate}
	\item a training set composed of hard problem instances with their optimal solutions (or good quality solutions),
	\item a vector of features $\bfphi(y;x)$ -- See Section 1.3 of~\citep{parmentierLearningApproximateIndustrial2019} for details on how such vector of features can be built,
	\item an algorithm solving the easy problem.
\end{enumerate}
Indeed, an algorithm solving instances of the easy problem is all we need to compute $F(\bftheta;x)$ and its gradient.
And such an algorithm is always available: the existence of such an algorithm is the reason why we want to approximate the hard problem by this specific easy problem.
The structured learning approach based on Fenchel-Young losses detailled in this paper is therefore much more generic than the MLE approach, because it does not require additional algorithms. 





\section{Computational experiments} 
\label{sec:numerical_results}

In this section we focus on the experiments done to build the predictor $\varphi_{\bftheta}$ and the comparisons of the learning based heuristics with respect to state-of-the-art algorithms. We first introduce how instances $\Gammah$ of the $1|r_j|\sum_j C_j$ problem are randomly generated (\cite{DT02}). For a given instance of $n$ jobs, processing times $p_j$ are drawn at random following the uniform distribution $[1;100]$ and release times $r_j$ are drawn at random following the uniform distribution $[1;50.5*n*\rho]$. Parameter $\rho$ enables to generate instances of different difficulties: we consider $\rho\in\{0.2, 0.4, 0.6, 0.8, 1.0, 1.25, 1.5, 1.75, 2.0, 3.0\}$. For each values of $n$ and $\rho$, $N$ instances are randomly generated leading for a fixed value of $n$ to $10*N$ instances. The values considered for $n$ and $N$ depend on the conducted experiments and are given in the next sections.

The algorithms considered in the experiments are the learning based heuristics \texttt{PMLH}, \texttt{IMLH} and \texttt{itMLH} with $m=150$. 
To evaluate the contribution of the predictor, we also implemented a version of \texttt{IMLH} in which instead of using the predictor to build an initial solution, we randomly generate it. Next, \texttt{LS} local search is applied to repair local suboptimal jobs sequencing. This heuristic is denoted by \texttt{RAND}.\\ 
We also use the RDI/APRTF heuristic (\cite{CTU96}), referred to as \texttt{RDIA}, the RBS heuristic with beam width $w=2$ (\cite{DT02}), referred to as \texttt{RBS}, and the matheuristic (\cite{DST14}), referred to as \texttt{MATH}. Besides, for instances with up to $n=110$ jobs, the optimal solution is computed by means of the branch-and-memorize algorithm in \cite{STD20}. 
All heuristic algorithms but \texttt{MATH} have been coded in C++ language. The code of heuristic \texttt{MATH} has been kindly provided by Fabio Salassa (\cite{DST14}). Testings have been done on a PC Intel XEON E5 with 8 cores of 2.20Ghz and 8Gb of RAM. 

\subsection{Features definition and selection}
\label{sec:LtP}

To build a predictor $\varphi_{\bftheta}$, the vector of features $\bfphi(j;\Gammah)$ must be defined.
We initially considered $66$ features,
and manually selected $d=27$ of these features that lead to good prediction performances.
This selection was done using a validation set distinct from the test set later used.
The $\bfphi(j;\Gammah)$ vector of (normalized) features is given in Table \ref{tab:TabFeat}.
We denote by $[j]^X$ the position of job $j$ in the sequence obtained by applying the sequencing rule $X\in\{$SPT, SRT, SP+RT$\}$. Rule SRT (resp. SP+RT) consists in sorting the jobs by increase value of the $r_j$'s (resp. $r_j+p_j$'s). Besides, features 15-17 and  23-27 are computed exploiting the SRPT algorithm (\cite{KB74}) which solves the preemptive version of the $1|r_j|\sum_j C_j$ problem, thus enabling to get in polynomial time a lower bound to the original problem. In a preemptive schedule computed by SRPT, some jobs $j$ can be interrupted at time $r_k$, while being processed, by the arrival of a job $k$ such that $p_k<(p_j-\pi_j)$ with $\pi_j$ the duration $j$ has been processed before its first preemption by job $k$. In case job $j$ is not preempted, we have $\pi_j=p_j$. Let $\#_j$ be the number of times job $j$ is preempted by another job, and $\#_T$ be the total number of preemptions. Finally, $[j]^{SRPT}$ is the position of the completion time of job $j$ in the SRPT schedule. For feature 16, $p_k$ refers to the processing time of the job that follows the initial part of job $j$ and of duration $\pi_j$. For features 18-21, $Decile(X)$ refers to the the decile number of $X\in\{r_j,p_j\}$ when the $X$'s are sorted by increasing value. For features 24-27, set $BS^p_j=\{k~|~[k]^{SRPT}<[j]^{SRPT}, p_k<p_j\}$, set  $BS^r_j=\{k~|~[k]^{SRPT}<[j]^{SRPT}, r_k<r_j\}$, set $BG^p_j=\{k~|~[k]^{SRPT}<[j]^{SRPT}, p_k>p_j\}$  and set $BG^r_j=\{k~|~[k]^{SRPT}<[j]^{SRPT}, r_k>r_j\}$.

\begin{table}[htp]
\begin{center}
	{\tiny
	\begin{tabular}{|c|c|r|r||c|c|r|r|}
	\hline
	 & Description & \multicolumn{1}{c|}{$\theta_k$}  & \multicolumn{1}{c||}{$\sigma_k$} && Description & \multicolumn{1}{c|}{$\theta_k$} & \multicolumn{1}{c||}{$\sigma_k$}\\ [0.2cm]
	 \hline
	 1 & $\frac{[j]^{SPT}}{n}$ &  4.05111 & 0.28865
	 & 15 & $\frac{p_j-\pi_j}{\sum_{i=1}^n p_i-\pi_i}$& -38.67500 & 0.03549\\ [0.2cm]
	 \hline
	 2 & $\frac{[j]^{SRT}}{n}$ &  -11.38040 & 0.28865
	 & 16 & $\frac{p_j-\pi_j}{p_k\sum_{i=1}^n p_i-\pi_i}$& 8.72219 & 0.04948\\ [0.2cm]
	 \hline
	 3 & $\frac{[j]^{SP+RT}}{n}$ &  -10.19020& 0.28865
	 & 17 & $\frac{p_j-\pi_j}{p_j\sum_{i=1}^n p_i-\pi_i}$ & 40.31390 & 0.03392\\ [0.2cm]
	 \hline
	 4 & $\frac{r_j}{p_j}\times \frac{\sum_{i=1}^n{p_i}}{\sum_{i=1}^n{r_i}}$  &  1.89904 & 0.029879 & 18 & $Decile(r_j)$& 0.84073 &  2.86997\\ [0.2cm]
	 \hline
	 5 & $\frac{p_j}{r_j}\times \frac{\sum_{i=1}^n{r_i}}{\sum_{i=1}^n{p_i}}$ &  -21.23830 & 0.04374 & 19 & $\frac{r_j}{Decile(r_j)}$& 206.65600 & 0.00482 \\ [0.2cm]
	 \hline
	 6 & $\frac{r_j}{\sum_{i=1}^n{r_i}}$ &  5440.67000 & 0.00839
	 & 20 & $Decile(p_j)$& 0.12019 & 2.86839\\ [0.2cm]
	 \hline
	 7 & $\frac{p_j}{\sum_{i=1}^n{r_i}}$ &  6467.43000 & 0.00077
	 & 21 & $\frac{p_j}{Decile(p_j)}$& 87.99630 &  0.00473\\ [0.2cm]
	 \hline
	 8 & $\frac{r_j+p_j}{\sum_{i=1}^n{r_i}}$ &  5286.99000 & 0.00856 & 22 & $\frac{\#_j}{\#_T}$ & -31.23980 & 0.03438\\ [0.2cm]
	 \hline
	 9 & $\frac{r_j}{\sum_{i=1}^n{p_i}}$ &  47.20590 & 0.59882
	 & 23 & $\frac{[j]^{SRPT}}{n}$& 125.59400 & 0.28865\\ [0.2cm]
	 \hline
	 10 & $\frac{p_j}{\sum_{i=1}^n{p_i}}$ &  -345.58400 & 0.00835	 & 24 & $\frac{|BS^p_j|}{\sum_{i=1}^n |BS^p_i|}$ & 162.31700 & 0.01218\\ [0.2cm]
	 \hline
	 11 & $\frac{r_j+p_j}{\sum_{i=1}^n{p_i}}$ &  47.14900 & 0.59890	 & 25 & $\frac{|BS^r_j|}{\sum_{i=1}^n |BS^r_i|}$& 429.34900 &  0.00878\\ [0.2cm]
	 \hline
	 12 & $\frac{r_j}{\sum_{i=1}^n{r_i+p_i}}$ &  -6733.09000 & 0.00802 & 26 & $\frac{|BG^p_j|}{\sum_{i=1}^n |BG^p_i|}$& 34.17510 &  0.01473\\ [0.2cm]
	 \hline
	 13 & $\frac{p_j}{\sum_{i=1}^n{r_i+p_i}}$ &  -3879.31000 & 0.00066 & 27 & $\frac{|BG^r_j|}{\sum_{i=1}^n |BG^r_i|}$& 39.68920 &  0.03044\\ [0.2cm]
	 \hline
	 14 & $\frac{r_j+p_j}{\sum_{i=1}^n{r_i+p_i}}$ & -6555.45000 &  0.00814
	 & & & & \\ [0.2cm]
	 \hline
	\end{tabular}}\\
    \caption{List of features of a given job $j$, and their value $\theta_k$ in the learned $\bftheta$.}
    \label{tab:TabFeat}
   \end{center}
\end{table}

\subsection{Learning algorithm}
A training database has been built by randomly generating instances $\Gammah$ with $n\in\{50,70,90,110\}$ and $N=100$, thus leading to $4000$ entries in the database. Each entry is defined by:
\begin{center}
	$(n, \sum_j C_j^*, r_1, p_1, \bfphi(1;\Gammah),..., r_n, p_n, \bfphi(n;\Gammah))$,
\end{center}
with $\sum_j C_j^*$ the optimal solution value for $\Gammah$. After learning, we obtain the normalized $\bftheta$ vector, as well as their standard deviations ${\bf \sigma}$, given in Table \ref{tab:TabFeat}. Generating the database took 2 hours of computing time.
We then build the  the sample average approximation of the learning problem of Section~\ref{ssub:perturbed_model_learning_with_fenchel_young_losses} by drawing $100$ samples of $\bfZ$.
Evaluating the objective of the learning problem thus requires to solve $4000 \times 100$ instances of the easy problem with the SPT rule.
We solve the learning problem with the BFGS implementation of~\cite{wieschollek2016cppoptimizationlibrary}.
The BFGS algorithms converges after $91$ iterations, which took a total of 1 hour and 31 minutes.
 


\subsection{Comparisons of the heuristics}
\label{sec:comparisons}

\paragraph{Comparison to optimal solutions}~\\
We first focus on the comparison of the heuristics with the optimal solution on instances $\Gammah$ with $n\in\{50,60, 70,80,90,100,110\}$ and $N=30$. None of the randomly generated instances corresponds to some instances generated when building the training database.
Table \ref{tab:TabCompOpt} presents the obtained results. For each problem size $n$ and each heuristic $H\in\{\texttt{RAND}, \texttt{PMLH}, \texttt{IMLH}, \texttt{itMLH}, \texttt{RDIA}, \texttt{RBS}, \texttt{MATH}\}$, we compute several statistics. Column $\delta_{avg}$ (resp. $\delta_{max}$) is the average (resp. maximum) deviation of heuristic $H$ to the optimal solution of the $N$ instances of size $n$. For a given instance $\Gammah$, the deviation $\delta$ is computed as follows:
\begin{center}
	$\delta=100.00\times \frac{\sum_j C_j(H)-\sum_j C_j^*}{\sum_j C_j^*}$,
\end{center}
with $\sum_j C_j(H)$ the value of the solution returned by $H$ and $\sum_j C_j^*$ the optimal solution value. Column $\# Opt$ is the percentage of instances for which heuristic $H$ finds an optimal solution. Column $T_{avg}$ (resp. $T_{max}$) is the average (resp. maximum) CPU time in seconds used by heuristic $H$.\\
First, the results in Table \ref{tab:TabCompOpt} show that heuristic \texttt{MATH} strongly outperforms all other heuristics but at the price of a CPU time significantly higher. Heuristic \texttt{RBS} is slightly worse than \texttt{MATH} and with a running time signicantly higher than for the other heuristics. These results are in line with those given in \cite{DST14}. 
The poor performance of \texttt{RAND} shows that the quality of the prediction made by $\varphi_{\bftheta}$ is crucial for the performance of \texttt{IMLH}: the local search alone does not enable to find a good solution.
Furthermore, even \texttt{PMLH}, which does not perform a local optimization after the prediction gives interesting results in terms of deviations to optimality. Besides, heuristic \texttt{IMLH} slightly outperforms heuristic \texttt{RDIA} both in terms of deviations and percentage of instances solved to optimality. Remind that the only difference between these two heuristics is the initial solution given to the RDI local search: \texttt{IMLH} uses a predictor and a fast local search, while \texttt{RDIA} uses a dedicated heuristic algorithm. Finally, we can remark that heuristic \texttt{itMLH}, that improves upon \texttt{IMLH}, provides deviations to optimality slightly worse than those of \texttt{RBS} but in a reduced running time.\\
We can conclude from these experiments that:
\begin{enumerate}
	\item Learning based heuristics are competitive with the state-of-the-art heuristics \texttt{RDIA} and \texttt{RBS}.
	\item Learning based heuristics require a reduced running time with respect to \texttt{RBS} and \texttt{MATH}.
	\item Heuristic \texttt{MATH} is a way far the most effective heuristic in terms of deviations to optimality.
\end{enumerate}

\begin{sidewaystable}
	\centering
	{\tiny
		\begin{tabular}{|c|c|c|c|c|c|c|c|c|c|c|c|c|c|c|c|}
			\hline
		 & \multicolumn{5}{|c|}{\texttt{RAND}} & \multicolumn{5}{|c|}{\texttt{PMLH}} &  \multicolumn{5}{|c|}{\texttt{IMLH}}\\
		 \cline{2-16}
		 $n$ & $\delta_{avg} (\%)$ & $\delta_{max} (\%)$ & $\# Opt (\%)$ & $T_{avg}$ (s) & $T_{max}$ (s) & $\delta_{avg} (\%)$ & $\delta_{max} (\%)$ & $\# Opt (\%)$ & $T_{avg}$ (s) & $T_{max}$ (s) & $\delta_{avg} (\%)$ & $\delta_{max} (\%)$ & $\# Opt (\%)$ & $T_{avg}$ (s) & $T_{max}$ (s) \\
		 \hline
         $50$ & 77.779 & 134.602 & 0.00 & 0.00 & 0.00 & 1.491 & 6.643 & 0.00 & 0.00 & 0.00 & 0.208 & 2.371 & 18.33 & 0.01 & 1.00 \\
         \hline
         $60$ & 80.699 & 150.660 & 0.00 & 0.00 & 0.00 & 1.212 & 5.231 & 0.00 & 0.01 & 1.0 & 0.181 & 3.720 & 18.00 & 0.01 & 1.00\\
		 \hline
         $70$ & 84.460 & 139.698 & 0.00 & 0.00 & 0.00 & 1.066 & 5.533 & 0.00 & 0.00 & 0.00 & 0.171 & 1.784 & 11.67 & 0.00 & 1.00 \\     
		 \hline
         $80$ & 86.597 & 147.243 & 0.00 & 0.01 & 1.00 & 0.994 & 4.857 & 0.00 & 0.00  & 1.0 & 0.157 & 2.647 & 7.00 & 0.01 & 1.00\\
		 \hline
         $90$ & 89.537 & 158.000 & 0.00 & 0.00 & 0.00 & 0.973 & 4.414 & 0.00 & 0.00 & 0.00 & 0.128& 1.481 & 10.67 & 0.01 & 1.00\\
		 \hline
         $100$ & 90.700 & 144.174 & 0.00 & 0.00 & 1.00 & 0.919 & 4.276 & 0.00 & 0.01 & 1.00 & 0.118 & 1.578 & 6.67 & 0.01 & 1.00\\
		 \hline
         $110$ & 91.666 & 145.854 & 0.00 & 0.00 & 0.00 & 0.903 & 3.848 & 0.00 & 0.01 & 1.00 & 0.103 & 1.209 & 8.33 & 0.01 & 1.00 \\
		 \hline
		\end{tabular} \\
	     ~\\
	     ~\\
		\begin{tabular}{|c|c|c|c|c|c|c|c|c|c|c|c|c|c|c|c|}
		\hline
		& \multicolumn{5}{|c|}{\texttt{itMLH}} & \multicolumn{5}{|c|}{\texttt{RDIA}} &  \multicolumn{5}{|c|}{\texttt{RBS}}\\
		\cline{2-16}
		$n$ & $\delta_{avg} (\%)$ & $\delta_{max} (\%)$ & $\# Opt (\%)$ & $T_{avg}$ (s) & $T_{max}$ (s) & $\delta_{avg} (\%)$ & $\delta_{max} (\%)$ & $\# Opt (\%)$ & $T_{avg}$ (s) & $T_{max}$ (s) & $\delta_{avg} (\%)$ & $\delta_{max} (\%)$ & $\# Opt (\%)$ & $T_{avg}$ (s) & $T_{max}$ (s) \\
		\hline
		$50$ & 0.055 & 1.157 & 26.00 & 0.51 & 1.00  & 0.229 & 2.687 & 19.00 & 0.00 & 0.00 & 0.014 & 0.302 & 67.00 & 0.43 & 1.00\\
		\hline
		$60$ & 0.048 & 0.571 & 23.33 & 0.55 & 1.00 & 0.191 & 2.058 & 13.00 & 0.00 & 0.00 & 0.017 & 0.426 & 62.00 & 0.73 & 1.00\\
		\hline
		$70$ & 0.048 & 1.079 & 18.33 & 0.67 & 1.00 & 0.218 & 1.804 & 12.67 & 0.00 & 0.00 & 0.015 & 0.158 & 57.33 & 1.31 & 3.00 \\
		\hline
		$80$ & 0.052 & 1.675 & 10.00 & 0.67 & 2.00 & 0.209 & 2.018 & 8.67 & 0.00 & 1.00  & 0.016 & 0.193 & 46.67 & 1.95 & 3.00\\
		\hline
		$90$ & 0.043 & 0.342 & 11.67 & 0.71 & 2.00 & 0.204 & 1.757 & 6.67 & 0.00 & 0.00 & 0.017 & 0.388 & 44.33 & 2.80 & 5.00 \\
		\hline
		$100$ & 0.037 & 0.359 & 8.33 & 0.80 & 2.00  & 0.182 & 2.273 & 7.33 & 0.00 & 0.00 & 0.013 & 0.162 & 43.33 & 3.96 & 7.00\\
		\hline
		$110$ & 0.034 & 0.280 & 8.67 & 0.90 & 2.00 & 0.181 & 1.760 & 6.33 & 0.00 & 1.00 & 0.013 & 0.120 & 37.67 & 5.54 & 7.00\\
		\hline
		\end{tabular} \\
		 ~\\
		~\\
		\begin{tabular}{|c|c|c|c|c|c|}
			\hline
			& \multicolumn{5}{|c|}{\texttt{MATH}} \\
			\cline{2-6}
			$n$ & $\delta_{avg} (\%)$ & $\delta_{max} (\%)$ & $\# Opt (\%)$ & $T_{avg}$ (s) & $T_{max}$ (s)  \\
			\hline
			$50$ & 0.000 & 0.042 & 98.33 & 4.82 & 30.00 \\
			\hline
			$60$ & 0.002 & 0.149 & 98.00 & 7.44 & 34.00 \\
			\hline
			$70$ & 0.001 & 0.067 & 95.67 & 10.53 & 46.00 \\
			\hline
			$80$ & 0.003 & 0.151 & 92.67 & 11.53 & 59.00 \\
			\hline
			$90$ & 0.004 & 0.377 & 90.33 & 14.00 & 81.00 \\
			\hline
			$100$ & 0.001 & 0.038 & 92.00 & 16.88 &  67.00 \\
			\hline
			$110$ & 0.001 & 0.031 & 90.33 & 20.85 & 68.00 \\
			\hline
		\end{tabular} \\
	}
	\caption{Comparison of the heuristics with optimal solutions}
     \label{tab:TabCompOpt}
\end{sidewaystable}

\paragraph{Comparison to the best known solutions}~\\
We now focus on the comparison of the heuristics with the best known solution on larger instances $\Gammah$ with $n\in\{120, 140, 160, 180, 200, 300, 500, 1000, 1500, 2000,$ $2500\}$ and $N=30$. For a given instance $\Gammah$, the best known solution is the best solution found by the heuristics. None of the randomly generated instances corresponds to some instances generated when building the training database. Also notice that a time limit of $180$s is imposed in the experiments: as soon as for a given size $n$, the average running time of a heuristic exceeds $180$s, this one is no longer run for higher values of $n$. The heuristic \texttt{RAND} is not included in these experiments.

Table \ref{tab:TabCompBNS} presents the obtained results. The meaning of the columns is the same than for Table \ref{tab:TabCompOpt}, with the modification that, for a given instance $\Gammah$, the deviation $\delta$ is computed as follows:
\begin{center}
	$\delta=100.00\times \frac{\sum_j C_j(H)-\sum_j C_j^{BNS}}{\sum_j C_j^{BNS}}$,
\end{center}
with $\sum_j C_j(H)$ the value of the solution returned by $H$ and $\sum_j C_j^{BNS}=\min_{H \text{is running}} (\sum_j C_j(H))$ the value of the best known solution.\\
Table \ref{tab:TabCompBNS} shows that heuristics \texttt{MATH} and \texttt{RBS} are unable to solve instances with more than $n=300$ jobs within the time limit of $180$s. Heuristic \texttt{RDIA} is able to solve instances with up to $2000$ jobs but is slower than \texttt{IMLH} and \texttt{itMLH}. Regarding the deviations to the best known solutions, heuristic \texttt{MATH} remains the most effective one, with an average deviation almost equal to 0. Heuristic \texttt{RBS} is the second most effective heuristic but, again, a slow one not able to solve large instances. Heuristics \texttt{IMLH} and \texttt{itMLH} provide very good results in terms of deviations with reduced average running times. \\
We can conclude from these experiments that:
\begin{enumerate}
	\item Learning based heuristics offer a very good trade-off between the quality of the computed solution and the running time required. They also show very low deviations to the best known solution.
	\item Heuristic \texttt{RDIA} is outperformed by both \texttt{IMLH} and \texttt{itMLH}.
	\item Heuristics \texttt{MATH} and \texttt{RBS} are enable to solve instances with more than 300 jobs within the time limit of $180$s.
\end{enumerate}

\begin{sidewaystable}
	\centering
	{\tiny
		\begin{tabular}{|c|c|c|c|c|c|c|c|c|c|c|c|c|}
			\hline
			& \multicolumn{4}{|c|}{\texttt{PMLH}} & \multicolumn{4}{|c|}{\texttt{IMLH}} &  \multicolumn{4}{|c|}{\texttt{itMLH}}\\
			\cline{2-13}
			$n$ & $\delta_{avg} (\%)$ & $\delta_{max} (\%)$ &  $T_{avg}$ (s) & $T_{max}$ (s) & $\delta_{avg} (\%)$ & $\delta_{max} (\%)$ & $T_{avg}$ (s) & $T_{max}$ (s) & $\delta_{avg} (\%)$ & $\delta_{max} (\%)$ & $T_{avg}$ (s) & $T_{max}$ (s) \\
			\hline
			$120$ & 0.927 & 3.610 & 0.02 & 1.00 & 0.106 & 1.279 & 0.03 & 1.00 & 0.034 & 0.648 & 0.97 & 2.00 \\
			\hline
			$140$ & 0.886 & 3.780 & 0.01 & 1.00 & 0.076 & 1.389 & 0.02 & 1.00 & 0.032 & 0.244 & 1.19 & 2.00 \\
			\hline
			$160$ & 0.930 & 3.691 & 0.02 & 1.00 & 0.079 & 0.740 & 0.04 & 1.00 & 0.034 & 0.276 & 1.26 & 3.00 \\     
			\hline
			$180$ & 0.974 & 4.804 & 0.01 & 1.00 & 0.081 & 0.665 & 0.02 & 1.00 & 0.029 & 0.146 & 1.42 & 2.00 \\
			\hline
			$200$ & 0.981 & 4.360 & 0.01 & 1.00 & 0.078 & 1.326 & 0.05 & 1.00 & 0.025 & 0.136 & 1.68 & 3.00 \\
			\hline
			$300$ & 1.146 & 5.821 & 0.02 & 1.00 & 0.060 & 0.750 & 0.05 & 1.00 & 0.025 & 0.193 & 2.37 & 5.00 \\
			\hline
			$500$ & 1.275 & 4.608 & 0.04 & 1.00 & 0.029 & 0.390 & 0.21 & 1.00 & 0.005 & 0.091 & 3.82 & 7.00 \\
			\hline
			$1000$ & 1.453 & 4.885 & 0.07 & 1.00 & 0.024 & 0.234 & 2.04 & 9.00 & 0.007 & 0.069 & 9.85 & 22.00 \\
			\hline
			$1500$ & 1.520 & 4.749 & 0.13 & 1.00 & 0.020 & 0.190 & 11.35 & 41.00 & 0.008 & 0.068 & 26.40 & 84.00 \\
			\hline
			$2000$ & 1.555 & 5.069 & 0.13 & 1.00 & 0.017 & 0.116 & 40.37 & 146.00 & 0.009 & 0.070 & 71.43 & 300.00 \\
			\hline
			$2500$ & 1.559 & 4.668 & 0.16 & 1.00 & 0.008 & 0.135 & 108.68 & 472.00 & 0.000 & 0.000 & 185.44 & 907.00 \\
			\hline
		\end{tabular} \\
		~\\
		~\\
		\begin{tabular}{|c|c|c|c|c|c|c|c|c|c|c|c|c|}
			\hline
			& \multicolumn{4}{|c|}{\texttt{RDIA}} & \multicolumn{4}{|c|}{\texttt{RBS}} &  \multicolumn{4}{|c|}{\texttt{MATH}}\\
			\cline{2-13}
			$n$ & $\delta_{avg} (\%)$ & $\delta_{max} (\%)$ &  $T_{avg}$ (s) & $T_{max}$ (s) & $\delta_{avg} (\%)$ & $\delta_{max} (\%)$ &  $T_{avg}$ (s) & $T_{max}$ (s) & $\delta_{avg} (\%)$ & $\delta_{max} (\%)$ & $T_{avg}$ (s) & $T_{max}$ (s) \\
			\hline
			$120$ & 0.167 & 1.759 & 0.02 & 1.00 & 0.010 & 0.091 & 7.57 & 11.00 & 0.000 & 0.000 & 21.17 & 96.00 \\
			\hline
			$140$ & 0.162 & 1.585 & 0.01 & 1.00 & 0.010 & 0.110 & 13.09 & 17.00 & 0.000 & 0.088 & 29.90 & 122.00 \\
			\hline
			$160$ & 0.171 & 1.421 & 0.02 & 1.00 & 0.010 & 0.066 & 20.38 & 26.00 & 0.000 & 0.000 & 38.97 & 205.00  \\     
			\hline
			$180$ & 0.132 & 0.955 & 0.03 & 1.00 & 0.009 & 0.065 & 31.25 & 38.00 & 0.000 & 0.000 & 49.85 & 151.00 \\
			\hline
			$200$ & 0.138 & 1.172 & 0.08 & 1.00 & 0.008 & 0.055 & 55.70 & 90.00 & 0.000 & 0.000 & 92.54 & 296.00 \\
			\hline
			$300$ & 0.101 & 1.185 & 0.25 & 1.00 & 0.005 & 0.032 & 195.52 & 238.00 & 0.001 & 0.158 & 214.79 & 301.00\\
			\hline
			$500$ & 0.046 & 1.090 & 1.53 & 9.00 & ---- & ---- &  ----& ----& ----& ----& ----& ----\\
			\hline
			$1000$ & 0.036 & 0.788 & 20.76 & 75.00 & ---- & ---- &  ----& ----& ----& ----& ----& ----\\
			\hline
			$1500$ & 0.022 & 0.754 & 94.89 & 496.00 & ---- & ---- &  ----& ----& ----& ----& ---- & ----\\
			\hline
			$2000$ & 0.022 & 0.717 & 314.98 & 917.00 & ---- & ---- &  ----& ----& ----& ----& ----& ----\\
			\hline
			$2500$ & ---- & ----& ----& ----& ----& ----& ----& ----& ----& ----& ----& ----\\
			\hline
		\end{tabular} \\
	}
	\caption{Comparison of the heuristics with best known solutions}
	\label{tab:TabCompBNS}
\end{sidewaystable}


\section{Conclusion}
\label{sec:conclusions}

In summary, we introduce several new heuristics for the scheduling problem $1|r_j|\sum_j C_j$.
These heuristics all rely on a structured learning predictor used to approximate an instance of $1|r_j|\sum_j C_j$ by an instance of $1||\sum_j C_j$.
Our heuristics are competitive with RDI-APRTF, the state-of-the-art ``fast'' heuristic for $1|r_j|\sum_j C_j$ on instances with up to 500 jobs, and outperform it on larger instances.
To the best of our knowledge, they are the first machine learning based algorithms to outperform state-of-the-art algorithms on a scheduling problem.
And research directions include the extension of our approach to other scheduling problems.

More generally, our work is a proof of concept for the ML for OR paradigm of \citep{parmentierLearningApproximateIndustrial2019}:
our results on $1|r_j|\sum_j C_j$ show that algorithms obtained using this ML for OR paradigm can outperform state-of-the-art algorithms on classic problems of the OR literature -- the state-of-the-art algorithms for the applications considered in \citep{parmentierLearningApproximateIndustrial2019} were not as challenging.
Furthermore, our results on $1|r_j|\sum_j C_j$ show the relevance of three extensions of the paradigm of \citep{parmentierLearningApproximateIndustrial2019} introduced in this paper.
First, the Fenchel-Young loss approach we introduce for the learning problem leads to practically efficient algorithms.
This paves the way to new applications of the paradigm because this learning method is generic.
And, we can boost the performance of the algorithms obtained with the ML for OR paradigm by using local search heuristics as decoding algorithms as in \texttt{IMLH}, and by perturbating $\bftheta$ as in $\texttt{itMLH}$.
These new ideas are not specific to the $1|r_j|\sum_j C_j$ problem, and future directions include their applications to other operations research problems.

\medskip
\noindent\textbf{Acknowledgments}.
We are grateful to Federico Della Croce for his advice on the algorithms to use in our benchmark, and to Fabio Salassa for providing us the code of the matheuristic.

\bibliography{biblio}

\appendix

\section{Proofs}
\label{sec:proofs}

\begin{proof}[Proof of Proposition~\ref{prop:propertiesOfF}]
    The main statement and the case where $Z$ has positive and differentiable density $\rmd\mu(z) \propto \exp(-\nu(z))\rmd z$ summarize results in Sections~2 and~3 of \citet{berthetLearningDifferentiablePerturbed2020}.
    The sampled minimization case immediately follows from the fact that $F\cdot,x)$ as a maximum of $m$ linear mappings.
\end{proof}

\begin{proposition}[Proof of Proposition~\ref{prop:propertiesOfFYL}]
    The convexity of $L$ is an immediate corollary  of the convexity of $F$ established in Proposition~\ref{prop:propertiesOfF}. 
    The Fenchel-Young inequality implies that the Fenchel-Young loss is non-negative, and equal to $0$ only if $\bfphi(y;x)$ is in the subdifferential of $F$.
    Since, a sum of convex functions among which at least one is strictly convex is strictly convex, the first point follows from the results in Section~4 of \citep{berthetLearningDifferentiablePerturbed2020}[Section 4].
    The second point immediately follows from the fact that $L(\bftheta;y,x) = 0$ if and only if $\bfphi(y;x) \in \partial_{\bftheta}F(\bftheta;x)$ and the fact that $\delta_{\bftheta}\max_{\bfeta \in \calC(x)}\langle \bftheta + \bfz |\bfeta\rangle$ is the set of $\bfeta$ realizing the maximum.
\end{proposition}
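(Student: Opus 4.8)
The plan is to recognize $L(\cdot;y,x)$ as the Fenchel-Young gap associated with the convex function $F(\cdot;x)$ and its Fenchel dual $\Omega(\cdot;x)$, and then to read off every assertion from standard convex analysis, feeding in the structural properties of $F$ already recorded in Proposition~\ref{prop:propertiesOfF}. The single tool driving the whole argument is the Fenchel-Young inequality: for $F(\cdot;x)$ convex with conjugate $\Omega(\cdot;x)$, one has $F(\bftheta;x) + \Omega(\bfeta;x) \geq \langle \bftheta | \bfeta\rangle$ for all $\bftheta,\bfeta$, with equality if and only if $\bfeta \in \partial_{\bftheta}F(\bftheta;x)$. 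Note also that $\calC(x)$, being the convex hull of the finite set $\{\bfphi(y;x)\colon y\in\calY(x)\}$, is a polytope, so the relevant maxima are attained and $F(\cdot;x)$ is finite on all of $\bbR^d$; this finiteness is what will let me apply the subdifferential calculus below without any constraint qualification.

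First I would dispatch the base claims and the first point together. Convexity of $L(\cdot;y,x)$ is immediate, since $L(\bftheta;y,x) = F(\bftheta;x) - \langle\bftheta|\bfphi(y;x)\rangle + \Omega(\bfphi(y;x);x)$ differs from $F(\cdot;x)$ only by an affine term in $\bftheta$, and $F(\cdot;x)$ is convex by Proposition~\ref{prop:propertiesOfF}. Substituting $\bfeta = \bfphi(y;x)$ in the Fenchel-Young inequality gives $L(\bftheta;y,x)\geq 0$, and the equality case yields $L(\bftheta;y,x)=0$ iff $\bfphi(y;x)\in\partial_{\bftheta}F(\bftheta;x)$, which is the main statement. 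For the first point I would invoke Proposition~\ref{prop:propertiesOfF}(1): with a positive differentiable density and $\calC(x)$ having non-empty interior, $F(\cdot;x)$ is differentiable and strictly convex. Differentiability collapses the subdifferential to the singleton $\{\nabla_{\bftheta}F(\bftheta;x)\}=\{\bfphi^*(\bftheta;x)\}$, so the zero-set condition becomes $\bfphi(y;x) = \bfphi^*(\bftheta;x)$; and strict convexity of $L$ follows because adding an affine term to a strictly convex function preserves strict convexity.

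For the second point I would compute $\partial_{\bftheta}F(\bftheta;x)$ explicitly in the sampled regime, where $F(\bftheta;x) = \frac{1}{m}\sum_{i=1}^m \max_{\bfeta\in\calC(x)}\langle\bftheta+\bfz_i|\bfeta\rangle$ is a sum of $m$ everywhere-finite convex functions. The Moreau-Rockafellar sum rule then gives $\partial_{\bftheta}F(\bftheta;x) = \frac{1}{m}\sum_{i=1}^m \partial_{\bftheta}\big(\max_{\bfeta\in\calC(x)}\langle\bftheta+\bfz_i|\bfeta\rangle\big)$ as a Minkowski sum. Each summand is the subdifferential of a shifted support function of $\calC(x)$, which equals the exposed face $\argmax_{\bfeta\in\calC(x)}\langle\bftheta+\bfz_i|\bfeta\rangle$. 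Combining this description of $\partial_{\bftheta}F(\bftheta;x)$ with the zero-set characterization from the main statement yields exactly the claimed decomposition $\bfphi(y;x)=\frac{1}{m}\sum_{i=1}^m\bfeta_i$ with each $\bfeta_i$ ranging over the corresponding argmax set.

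The main obstacle is the sampled case. One must carefully justify that the subdifferential of the piecewise-linear $F$ splits as a Minkowski sum of the individual subdifferentials (this is where the everywhere-finiteness of the summands on $\bbR^d$ matters, since it removes the need for any qualification), and then correctly identify each piece as the \emph{full} set of maximizers rather than a single maximizer. Everything else reduces to the Fenchel-Young inequality and to importing the convexity, differentiability, and strict-convexity facts already established in Proposition~\ref{prop:propertiesOfF}.
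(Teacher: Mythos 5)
Your proof is correct and follows essentially the same route as the paper: the Fenchel--Young inequality for non-negativity and the zero-set characterization, strict convexity of $F$ plus an affine term for the first point, and the identification of $\partial_{\bftheta}F$ with the (averaged) sets of maximizers for the second point. The only difference is that you make explicit what the paper leaves implicit or delegates to \citet{berthetLearningDifferentiablePerturbed2020} --- namely the Moreau--Rockafellar sum rule for the sampled case and the collapse of the subdifferential to $\{\bfphi^*(\bftheta;x)\}$ under differentiability --- which makes your argument more self-contained but not substantively different.
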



\section{Influence of perturbation strength}
\label{app:perturbationStrength}

In this appendix, we explain why the strength of the perturbation does not impact the result of our method.
We therefore suppose to use the perturbation $\bftheta + \varepsilon \bfZ$ wherever we used the perturbation $\bftheta + \bfZ$ in the paper, with $\varepsilon > 0$.
Remark that this latter assumption is w.l.o.g.: Since $\bfZ \sim \calN(0,I_d)$ is symmetric, moving from $\varepsilon$ to $-\varepsilon$ does not change the distribution of $\bftheta + \varepsilon \bfZ$.
Let 
\begin{align*}	
	F_{\varepsilon}(\bftheta;x) &:= \bbE_{\bfZ} \Big( \max_{y \in \calY(x)} \langle \bftheta + \varepsilon \bfZ| \bfphi(y;x)\rangle \Big) = ,\bbE_{\bfZ} \Big( \max_{\bfeta \in \calC(x)} \langle \bftheta + \varepsilon \bfZ| \bfeta\rangle \Big) 
	\quad \text{and,} \\ 
	\bfphi_{\varepsilon}^*(\bftheta;\bfx) &:= \bbE_{\bfZ}\Big(\bfphi\Big(\argmax_{y \in \calY(x)}\big\langle\bftheta + \varepsilon \bfZ | \bfphi(y;x)\big\rangle;x\Big)\Big) = \bbE\Big(\argmax_{\bfeta \in \calC(x)}\langle \bftheta + \varepsilon Z | \bfeta \rangle\Big).
\end{align*}
Let $F^*_{\varepsilon}(\cdot;x)$ be the Fenchel dual of $F_{\varepsilon}(\cdot;x)$ and recall that $\Omega := F_1^*$, i.e., 
$$F^*_{\varepsilon}(\bfeta;x) := \argmax_{\bftheta \in \calC(x)}\langle \bfeta | \bftheta \rangle - F_{\varepsilon}(\bfeta;y) \quad \text{and} \quad \Omega(\bfeta;x) := F_1^*(\bfeta;x).$$
Proposition 2.2 of \citet{berthetLearningDifferentiablePerturbed2020} establishes that  

\begin{equation} \label{eq:influenceOfEpsilonOnF}
    F_{\varepsilon}(\bftheta;x) = \varepsilon F_1(\frac{\bftheta}{\varepsilon}), 
    \quad
    F^*_{\varepsilon}(\cdot;x) = \varepsilon \Omega(\cdot;x),
    \quad \text{and}\quad
    \bfphi_{\varepsilon}^*(\bftheta;\bfx) = \bfphi_{1}^*(\frac{\bftheta}{\varepsilon};\bfx).
\end{equation}
Replacing $\bftheta + \bfZ$ by $\bftheta + \varepsilon \bfZ$  amounts to replace $F$ and $\Omega$ by $F_{\varepsilon}$ and $F_{\varepsilon}^*$ in the learning Problem~\eqref{eq:learningProblem}.
If we denote by $\bftheta^*$ the optimal solution of Problem~\eqref{eq:learningProblem} with the standard perturbation $\varepsilon = 1$, and  $\bftheta_{\varepsilon}^*$ the optimal solution with the perturbation with strength $\varepsilon$, it follows from Equation~\eqref{eq:influenceOfEpsilonOnF} that  $\bftheta^*_\varepsilon = \varepsilon \bftheta^*$.
Hence, the structured prediction problem we obtain for any scenario $\omega$ is 
$$ \argmax_{y \in \calY(x)}\langle \bftheta_{\varepsilon}^* + \varepsilon Z(\omega) | \bfphi(y;x)\rangle = \argmax_{y \in \calY(x)}\varepsilon\langle \bftheta^* + Z(\omega) | \bfphi(y;x)\rangle$$ 
and we obtain the same predictions as the one we could have obtained using $\varepsilon = 1$.

\end{document}